\documentclass{amsart}
\usepackage{amssymb, amsfonts, amsmath, amsthm}

\DeclareMathSymbol{\twoheadrightarrow}  {\mathrel}{AMSa}{"10}

\DeclareMathSymbol{\twoheadrightarrow} {\mathrel}{AMSa}{"10}

                                            \def\const{\mathrm{const}}

                 \def\rg{\mathrm{rg}}

\def\C{{\mathbb C}}

\def\M{{\mathcal M}}

              \def\Mult{\mathrm{Mult}}

        \def\K_a{\bar{K}}

\def\M{\mathrm{M}}

\def\dim{\mathrm{dim}}

\def\K{{\mathcal{K}}}



\newtheorem{thm}{Theorem}[section]

\newtheorem{cor}[thm]{Corollary}

\theoremstyle{definition}

\newtheorem{ex}[thm]{Example}
\newtheorem{rem}[thm]{Remark}

\title[Polynomials  and  tangent maps]{Polynomials in one variable  and ranks of certain tangent maps}

\author{Yuri G. Zarhin}

\address{Department of Mathematics, Pennsylvania
State University, University Park, PA 16802, USA}

\address{Institute for Mathematical Problems in Biology,
Russian Academy of Sciences, Pushchino, Moscow Region, Russia}

\email{zarhin\char`\@math.psu.edu}

\begin{document}
\begin{abstract}
We study a map that sends a monic degree $n$ complex polynomial
$f(x)$ without multiple roots to the collection of $n$ values of its
derivative at the roots of $f(x)$. We give an answer to a question
posed by Yu.S. Ilyashenko.

MSC: 14A25; 37F10
\end{abstract}

\maketitle

\section{Definitions, Notation, Statements}
We write $\C$ for the field of complex numbers. Our aim is to
compute the rank of the following map.

Let us consider the $n$-dimensional complex manifold
$P_n\subset\C^n$ of all monic complex polynomials of degree  $n\ge
2$
$$f(x)=x^n+\sum_{i=0}^{n-1} a_i x^i$$ with coefficients
 $a=(a_0, \dots , a_{n-1})$  and without multiple roots. We denote
 the roots of
 $f(x)$ by
$$\alpha=\{\alpha_1, \dots , \alpha_n\},$$
somehow ordering them; locally (with respect to $a$), one may choose
each  $\alpha_i$  (using Implicit Function Theorem) as a smooth
(univalued) function in  $a$. Further,  we will try to differentiate
these functions with respect to coordinates, without computation of
the roots. And here is our map
$$M:a=(a_0, \dots , a_{n-1}) \mapsto f^{\prime}(\alpha)=(f^{\prime}(\alpha_1), \dots f^{\prime}(\alpha_n)) \in \C^n .$$
Abusing notation, we may assume that $M$  is defined locally on
$P_n$ and write $M(f)$ instead of $M(a_0, \dots , a_{n-1})$. Let
 $$dM: \C^n \to \C^n$$ be the corresponding tangent map
  (at the point $f(x)$). It is convenient to identify the tangent
  space
   $\C^n$  with the space of all polynomials
  $p(x)$ of degree $\le n-1$. Namely, to a polynomial $p(x)=\sum_{i=0}^{n-1} c_i x^i$
 one assigns the tangent vector $(c_0, \dots, c_{n-1}) \in \C^n$.
 So, to the derivative $f^{\prime}(x)$ corresponds the tangent
 vector
$(a_1, \dots , (n-1)a_{n-1},n)\in \C^n$.
 We denote by $W=W(f)$ the complex vector space of all polynomials $p(x)$ of degree $\le
 n-2$ such that the polynomial
 $$R(x):=f^{\prime\prime}(x)p(x)-f^{\prime}(x)p^{\prime}(x)$$ is divisible by
 $f(x)$.

\begin{thm}
\label{main1} The rank of the tangent map $dM: \C^n \to \C^n$ is
 $n-1$ at all points of $P_n$. The kernel of $dM$ always
contains $f^{\prime}(x)$ and coincides with the one-dimensional
subspace $\C\cdot f^{\prime}(x)$.
\end{thm}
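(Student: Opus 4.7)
The plan is to compute $dM(p)$ explicitly and identify its kernel; the main obstacle will be showing the kernel is no larger than $\C \cdot f'$. For a tangent direction $p \in \C[x]_{\le n-1}$, differentiating the implicit relation $f(\alpha_i) = 0$ with respect to the coefficients of $f$ yields $\dot\alpha_i = -p(\alpha_i)/f'(\alpha_i)$, and then the chain rule applied to $f'(\alpha_i)$ gives
$$(dM(p))_i = p'(\alpha_i) - \frac{p(\alpha_i)\, f''(\alpha_i)}{f'(\alpha_i)} = -\frac{R(\alpha_i)}{f'(\alpha_i)}.$$
Since $f$ has simple roots, $p \in \ker(dM)$ if and only if $R(\alpha_i) = 0$ for every $i$, equivalently $f \mid R$.

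Next, taking $p = f'$ gives $R = f''f' - f'f'' = 0$, so $f' \in \ker(dM)$ and $\rank(dM) \le n-1$. To conclude $\ker(dM) = \C \cdot f'$, I would use $\deg f' = n-1$ to decompose any $p \in \C[x]_{\le n-1}$ uniquely as $p = \lambda f' + q$ with $\lambda \in \C$ and $q \in \C[x]_{\le n-2}$; since $f'$ itself is in the kernel, $p \in \ker(dM)$ if and only if $q \in \ker(dM) \cap \C[x]_{\le n-2} = W$. Thus the theorem reduces to the statement $W = 0$.

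The substantive step is showing $W = 0$: if $\deg p \le n-2$ and $R = f h$ for some polynomial $h$, then $p = 0$. A leading-coefficient calculation shows that for $d = \deg p \le n-2$ the top coefficient of $R$ is $n\, p_d\, (n-1-d) \ne 0$, so $\deg R = n+d-2$ and $\deg h = d-2$; in particular for $d \le 1$ one has $\deg R < n$, forcing $R = 0$, after which comparing leading coefficients in $f''p = f'p'$ yields $p = 0$ immediately. The genuinely hard range is $2 \le d \le n-2$, and this is the main obstacle. A natural approach is to pass to the finite-dimensional quotient $A = \C[x]/(f) \cong \C^n$ (the product decomposition using squarefreeness of $f$), in which $\overline{f'}$ is invertible, so the condition $f \mid R$ becomes the linear identity $\partial_A \overline{p} = \overline{u}\cdot \overline{p}$ with $\overline{u} = \overline{f''}/\overline{f'}$, where $\partial_A$ is the endomorphism of $A$ induced by $d/dx$ acting on $\C[x]_{\le n-1}$. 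Thus $W = 0$ is equivalent to the $\C$-linear operator $\partial_A - M_{\overline u}$ on $A$ having rank exactly $n-1$, with one-dimensional kernel spanned by $\overline{f'}$. I expect the main difficulty to lie in carrying out this rank calculation rigorously, using squarefreeness of $f$ in an essential way --- either by exhibiting a non-vanishing $(n-1)\times(n-1)$ minor (the distinctness of the $\alpha_i$ points toward a Vandermonde-type determinant) or by a contradiction argument showing that a hypothetical second kernel element would force $f$ to acquire a multiple root, contradicting $f \in P_n$.
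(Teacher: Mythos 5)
Your setup is correct and matches the paper's Section \ref{dif} almost exactly: the formula $(dM(p))_i = p'(\alpha_i) - p(\alpha_i)f''(\alpha_i)/f'(\alpha_i)$, the identification of the kernel with $\{p : f \mid R\}$, the observation that $f'$ lies in the kernel, the degree decomposition $p = \lambda f' + q$, and the reduction to $W = 0$ are all exactly what the paper does. The case $\deg p \le 1$ is also handled correctly by your degree count. However, there is a genuine gap: for the range $2 \le \deg p \le n-2$, which you yourself identify as ``the genuinely hard range,'' you do not give a proof. You offer two candidate strategies (a non-vanishing minor of Vandermonde type, or a contradiction forcing a multiple root) but carry out neither, and the reformulation as a rank statement for the operator $\partial_A - M_{\overline{u}}$ on $A = \C[x]/(f)$ is only a restatement of $W=0$, not an argument for it. This missing step is precisely the substantive content of the theorem; everything before it is routine.

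For comparison, the paper closes this gap via Theorem \ref{algebra}, whose proof is an induction on $n$: assuming a nonzero $p$ of degree $\le n-2$ with $f \mid f''p - f'p'$ exists and $f$ has a simple root $\alpha$, one picks a nonzero $q$ in the two-dimensional span of $f'$ and $p$ with $q(\alpha)=0$; the relation $f''q - f'q' = hf$ evaluated at $\alpha$ forces $q'(\alpha)=0$, so $q = (x-\alpha)^2 q_1$, and after writing $f = (x-\alpha)f_1$ and cancelling powers of $(x-\alpha)$ one finds that $f_1'' q_1 - f_1' q_1'$ is divisible by $f_1$, with $\deg q_1 \le \deg f_1 - 2$. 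The base case $n=4$ gives a contradiction outright, and the induction shows that $f$ must be divisible by the square of a quadratic polynomial, contradicting $f \in P_n$. Your second proposed strategy (``a hypothetical second kernel element would force $f$ to acquire a multiple root'') is indeed the right idea, and your first (a Vandermonde-type minor) is essentially the Rees--Kulikov approach in the paper's appendix; but as written, neither is executed, so the proof is incomplete at its crucial point.
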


\begin{rem}
\label{period} The non-triviality of the kernel of
 $dM$  is related to the fact that  for
 each (small) complex number
 $\epsilon$ the map $M$ sends
 $f(x)$ and $f(x+\epsilon)$ to the same vector in $\C^n$.
\end{rem}

Denote by $H_{n}$  the space of all monic complex polynomials
 $g(x)$ of degree
$n\ge 2 $ such that the map
$$G: \C \to \C, z \mapsto g(z)$$
has exactly $n$ fixed points. Clearly,  $g \in H_n$ if and only if
$g(x)-x$ has no multiple roots, i.e.,
$$g(x)-x\in P_n.$$
It is also clear that the roots  $\beta_1, \dots , \beta_n$ of
 $f(x)=g(x)-x$ are exactly the fixed points of $G$ and the corresponding  {\sl  multiplier}
 $g^{\prime}(\beta_i)$ of $\beta_i$ coincides with
 $f(\beta_i)+1$.

Let us consider the locally defined map
$$\Mult: H_{n} \to \C^{n}, \ g(x) \mapsto (g^{\prime}(\beta_1), \dots g^{\prime}(\beta_n))=M(g(x)-x)+(1, \dots , 1),$$
which assigns to $G$ the collection of its multipliers.

\begin{cor}
\label{mult}
 The tangent map $d \Mult: \C^n \to \C^n$  to the multiplier map $\Mult$
  has rank $n-1$ at all points of $H_n$.
\end{cor}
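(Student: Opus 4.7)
The plan is to deduce this corollary directly from Theorem \ref{main1}, observing that $\Mult$ differs from $M$ only by two affine translations, one on the source and one on the target, both of whose differentials are the identity.

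First I would introduce the translation map $\tau: H_{n} \to P_n$, $g(x) \mapsto g(x)-x$. In terms of the coefficient coordinates on the spaces of monic polynomials, writing $g(x)=x^n+\sum_{i=0}^{n-1}b_i x^i$, this sends $(b_0,b_1,\dots,b_{n-1})$ to $(b_0,b_1-1,b_2,\dots,b_{n-1})$. Thus $\tau$ is an affine isomorphism of complex manifolds, and under the identification of both tangent spaces with $\C^n$ explained before Theorem \ref{main1}, its differential $d\tau$ at any point is the identity of $\C^n$. Similarly, the translation $v\mapsto v+(1,\dots,1)$ on the target $\C^n$ has differential equal to the identity.

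Next, the defining formula
$$\Mult(g)=M(g(x)-x)+(1,\dots,1)=M(\tau(g))+(1,\dots,1)$$
recorded just before the corollary lets me compute, via the chain rule,
$$d\,\Mult = dM\circ d\tau = dM$$
as linear endomorphisms of $\C^n$, with $dM$ evaluated at the point $f(x):=g(x)-x\in P_n$.

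Finally, applying Theorem \ref{main1} to $f$ gives $\rank(dM)=n-1$, and therefore $\rank(d\,\Mult)=n-1$ at the point $g\in H_n$. There is no real obstacle here; the only thing to be careful about is the identification of tangent spaces with $\C^n$, which makes $\tau$ literally a translation so that $d\tau$ is genuinely the identity matrix rather than merely an isomorphism.
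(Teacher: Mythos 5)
Your proof is correct and follows exactly the route the paper intends: the formula $\Mult(g)=M(g(x)-x)+(1,\dots,1)$ exhibits $\Mult$ as $M$ composed with translations whose differentials are the identity, so $d\Mult=dM$ and Theorem~\ref{main1} gives the rank. The paper treats this as immediate from that formula, and your careful check of the tangent-space identifications fills in the only detail worth spelling out.
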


\begin{rem}
The non-triviality of the kernel of $d \Mult$ is related to the fact
that  for
 each (small) complex number
 $\epsilon$
the maps $z \mapsto g(z)$ and $z \mapsto g(z+\epsilon)-\epsilon$ are
conjugate and have the same collection of multipliers.
\end{rem}

In the course of the proof of Theorem \ref{main1} we use the
following purely algebraic assertion that   has a certain
independent interest.

\begin{thm}
\label{algebra} Let $n\ge 2$ be an integer and $f(x)$  a complex
degree $n$ polynomial.  Suppose that there exists a nonzero complex
polynomial $p(x)$ of degree $\le n-2$ such that
$f^{\prime\prime}(x)p(x)-f^{\prime}(x)p^{\prime}(x)$ is divisible by
$f(x)$. Then:
\begin{itemize}
\item[(i)]
 $n \ge 4, \ \deg(p) \ge 2$.
\item[(ii)]
 There exists a quadratic polynomial $\tilde{p}(x)$ such that
$f(x)$ is divisible by $\tilde{p}(x)^2$. In particular, $f(x)$ has a
multiple root.
\item[(iii)]
If $\deg(p)=2$ then
 $p(x)$ divides both $f(x)$ and $f^{\prime}(x)$; in particular, all
 the roots of $p(x)$ are multiple
 roots of
  $f(x)$.
\item[(iv)] If $n=4$ then $\deg(p)=2$  and there exists a nonzero complex number
$c$ such that $f(x)=c\cdot p(x)^2$.
\end{itemize}
\end{thm}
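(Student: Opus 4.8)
The plan is to route all four parts through a single structural device. Since $f\mid R(x)=f''p-f'p'$, the quotient $q:=R/f$ is a polynomial and $f$ is a polynomial solution of the second–order linear ODE
$$L[y]:=p\,y''-p'\,y'-q\,y=0,$$
because $L[f]=R-qf=0$. Part (i) I would dispose of by a degree count. If $R=0$ then $(f'/p)'=0$, forcing $f'=\mathrm{const}\cdot p$, which is impossible by degrees; so $R\neq0$. Computing leading terms gives $\deg R=n+\deg p-2$ (the leading coefficient $n(n-1-\deg p)$ is nonzero since $\deg p\le n-2$), whence $\deg q=\deg p-2$; and $f\mid R$ with $R\neq0$ forces $\deg R\ge n$, giving $\deg p\ge2$ and therefore $n\ge4$.

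The engine for the remaining parts is that the polynomial solutions of $L[y]=0$ form only the one–dimensional space $\C\cdot f$. I would prove this from the indicial equation at infinity: substituting $y=x^d$ and using that the leading coefficient of $q$ equals $n(n-1-\deg p)$ times that of $p$, the top–degree term of $L[x^d]$ vanishes only when $d(d-1-\deg p)=n(n-1-\deg p)$, whose unique nonnegative root is $d=n$ (the other being $1+\deg p-n<0$). Thus every nonzero polynomial solution has degree exactly $n$, so two linearly independent ones could be combined into a nonzero solution of smaller degree — impossible — and the space is $\C\cdot f$.

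For (ii) I would use reduction of order. Writing $L$ as $y''-(p'/p)y'-(q/p)y=0$, the Wronskian is a constant multiple of $p$, so a second solution is $y_2=f\int (p/f^2)\,dx$. At a simple root $\alpha$ of $f$ the residue of $p/f^2$ equals $-R(\alpha)/f'(\alpha)^3$, which vanishes because $f\mid R$; and a short local computation shows that at a multiple root of multiplicity $m$ the divisibility $f\mid R$ forces $\mathrm{ord}_\alpha p\ge1$ when $m=2$ and $\ge2$ when $m\ge3$, hence $\ge m-1$ whenever $m\le3$. Now suppose (ii) fails, i.e.\ $f$ has at most one multiple root, of multiplicity $2$ or $3$. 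The residues of $p/f^2$ at all simple roots vanish, and since $p/f^2$ is a proper rational function its total residue is zero; as at most one multiple root is present, its residue must vanish as well. Therefore $\int(p/f^2)\,dx$ is rational, and the bound $\mathrm{ord}_\alpha p\ge m-1$ ensures $y_2=f\int(p/f^2)$ has no poles, so $y_2$ is a polynomial independent of $f$, contradicting one–dimensionality. I expect this sum–of–residues step to be the main obstacle: the purely local conditions do allow a single double or triple root, and only the global vanishing of the total residue, combined with the rigidity of the polynomial solution space, rules it out. (With two or more multiple roots, or one of multiplicity $\ge4$, the residues need not vanish and the argument correctly produces no contradiction, matching the statement of (ii).)

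For (iii), when $\deg p=2$ we have $\deg q=0$, so $q$ is a nonzero constant $c$ and $p''$ is the constant $2a$ ($a$ the leading coefficient of $p$). Differentiating $L[f]=0$ eliminates the $f$–term and yields $p\,f'''=(2a+c)f'$; since $2a+c=a(n-1)(n-2)\neq0$, this gives $p\mid f'$, and then $p\mid f$ follows from $cf=pf''-p'f'$, so every root of $p$ is a common root of $f$ and $f'$, i.e.\ a multiple root. For (iv), $n=4$ forces $\deg p=2$ by (i), so (iii) gives $f=p\,r$ with $\deg r=2$ and $p\mid f'$. If $p$ has distinct roots then $\gcd(p,p')=1$ gives $p\mid r$, so $f=c\,p^2$; if $p=a(x-\delta)^2$, the divisibilities make $\delta$ a root of $f$ of multiplicity $\ge3$, and substituting $f=a'(x-\delta)^3(x-\eta)$ into $R=cf$ forces $\eta=\delta$, so again $f=c\,p^2$.
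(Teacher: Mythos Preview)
Your argument is correct; parts (i) and (iii) are essentially the paper's own proofs (degree count for the former, differentiating the relation $R=cf$ for the latter), and your treatment of (iv) is a minor variant that replaces the paper's appeal to the inductive step by a short direct computation in the case $p=a(x-\delta)^2$.

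The genuine divergence is in (ii). The paper proceeds by induction on $n$: it picks a simple root $\alpha$ of $f$, works in the two–dimensional space $\C f'+\C p$ to find a $q$ vanishing at $\alpha$, shows $\alpha$ is in fact a double root of $q$, and then by an explicit substitution $f=(x-\alpha)f_1$, $q=(x-\alpha)^2 q_1$ verifies that $f_1''q_1-f_1'q_1'$ is again divisible by $f_1$, reducing the degree by one. Your route recasts $f\mid R$ as the statement that $f$ solves the second–order equation $py''-p'y'-qy=0$, uses the indicial equation at infinity to pin the polynomial solution space to $\C f$, and then invokes reduction of order: the second solution is $y_2=f\int p/f^2$, and the residue bookkeeping (vanishing at simple roots because $R(\alpha)=0$; vanishing at the lone multiple root by the global residue theorem) forces $\int p/f^2$ to be rational, with the local bound $\mathrm{ord}_\alpha p\ge m-1$ for $m\le 3$ making $y_2$ pole–free, whence $y_2$ is a polynomial with $W(f,y_2)=p\ne 0$, contradicting the rigidity of the polynomial solution space. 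What the paper's approach buys is complete elementarity (pure polynomial algebra, valid verbatim over any field of characteristic $>n$, as the paper notes), at the price of a somewhat intricate hands-on reduction. What your approach buys is conceptual transparency---the obstruction is literally the existence of a second polynomial solution---and it explains structurally why a single double or triple root is the delicate case (only there does the global sum-of-residues constraint do real work), at the cost of importing Abel's formula and a residue argument.

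One small remark on phrasing: the contradiction is cleanest if you note that $W(f,y_2)=f^2\cdot(p/f^2)=p\neq 0$ forces $y_2\notin\C f$, rather than leaving ``independent of $f$'' to the reader; once $y_2$ is a polynomial this immediately collides with the one–dimensionality. Equivalently (and without using one–dimensionality at all) one may observe that the rational antiderivative with $V(\infty)=0$ makes $fV$ a pole–free rational function tending to $0$ at infinity, hence $fV\equiv 0$, which gives $V\equiv 0$ and thus $p=0$ directly.
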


\section{Differentiation}
\label{dif} The first question that naturally arises is how to deal
with $M$? We interpret the ordering of the roots as a choice of an
isomorphism of commutative semisimple
  $\C$-algebras
$$\psi:\Lambda=\C[x]/f(x)\C[x] \cong \C^n, \ u(x)+f(x)\cdot \C[x] \mapsto u(\alpha):=(u(\alpha_1), \dots u(\alpha_n))$$
and carry out all the computations, including the differentiation
with respect to $a$, of functions that take  values in the algebra
$\Lambda$, despite of the fact that this algebra does depend on the
coefficients $a$! (However, its isomorphism  class does not depend
on the coefficients.)
  Of course, while differentiating, we
will use Leibnitz's rule and  that  $f(x)=0$ in $\Lambda$. In what
follows we will often mean under polynomials their images in
 $\Lambda$ ( i.e., the collection of their values at the roots of $f(x)$,
 while we try not refer to the roots explicitly). Notice that the absence of multiple
 roots means that
 $f^{\prime}(x)$ is an invertible element of
 $\Lambda$. Notice also that $\alpha=(\alpha_1, \dots ,
\alpha_n) \in \C^n$ is the image (under $\psi$) of the independent
variable $x$.

The first thing that we want to compute is the derivatives
  $d\alpha/da_i$. Since $f(\alpha)=0$,  $df(\alpha)/da_i=0$. We have
 $$df(\alpha)/da_i= \frac{\partial f}{\partial a_i}(\alpha) + f^{\prime}(\alpha)\cdot d\alpha/da_i.$$
 Since $\partial f/\partial a_i=x^i$, we obtain that
 $$0=\alpha^i+ f^{\prime}(\alpha)\cdot d\alpha/da_i,$$ which gives
 us
 $$d\alpha/da_i=-  [f^{\prime}(\alpha)]^{-1} \alpha^i.$$ It follows
 that for any polynomial
  $u(x)$ (whose coefficients may depend on $a$)
 $$du(\alpha)/da_i= \frac{\partial u}{\partial a_i}(\alpha)  +u^{\prime}(\alpha)\times \{-  [f^{\prime}(\alpha)]^{-1} \alpha^i\}.$$
 We are interested in the case when
 $$u(x)=f^{\prime}(x)=n x^{n-1}+\sum_{i=1}^{n-1}i a_i x^{i-1}.$$ We
 obtain that
 $$d f^{\prime}(\alpha)/d a_i =i \alpha^{i-1} - [f^{\prime}(\alpha)]^{-1} \alpha^i  f^{\prime\prime}(\alpha)$$
 (of course, if $i=0$ then the first term disappears).

Actually, the rank of $dM$ at $f(x)$ is the dimension of the
subspace of   $\Lambda$ generated by
  $n$ elements $\frac{d f^{\prime}}{d a_i}(\alpha)$.
 Suppose that a collection of  $n$ complex numbers $c_0, \dots , c_{n-1}$ satisfies  $\sum_{i=0}^{n-1} c_i \frac{d f^{\prime}}{d a_i}(\alpha)=0$ in $\Lambda$.
 If we put $p(x)=\sum_{i=0}^{n-1} c_i x^i$ then one may easily observe that $p^{\prime}(x)=\sum_{i=1}^{n-1} i c_i x^{i-1}$ and in
 $\Lambda$ the equality
 $$0=\sum_{i=0}^{n-1} c_i \frac{d f^{\prime}}{d a_i}(\alpha)=p^{\prime}(\alpha)- [f^{\prime}(\alpha)]^{-1}p(\alpha)  f^{\prime\prime}(\alpha)$$
 holds. Multiplying (without loss of generality) this equality by
 the invertible element
   $f^{\prime}(\alpha)$, we obtain the equivalent condition:
 $f^{\prime}(\alpha)p^{\prime}(\alpha)-p(\alpha)f^{\prime\prime}(\alpha)=0$
 in
 $\Lambda$. In other words, the polynomial
  $f^{\prime}(x)p^{\prime}(x)-p(x)f^{\prime\prime}(x)$ is divisible
  by
  $f(x)$. Now it is clear that the rank of
  $dM$ at $f(x)$ equals $n-\{$dimension of the space of polynomials  $p(x)$ of degree $\le
  n-1$ such that
  $f^{\prime}(x)p^{\prime}(x)-p(x)f^{\prime\prime}(x)$ is divisible by $f(x)\}$. Obviously, this space contains nonzero  $f^{\prime}(x)$,
which implies that the rank of
   $dM$ always does not exceed $n-1$.
 Since the degree of $f^{\prime}(x)$ is $n-1$, it is easy to observe
 that
 the kernel of $dM$ at $f(x)$ coincides with the direct sum
  $\C\cdot f^{\prime}(x)\oplus W(f)$.   It follows readily that the rank of
  $dM$ at $f(x)$ equals
 $$(n-1)-\dim_{\C}(W).$$

{\bf Proof of Theorem  \ref{main1}} (modulo Theorem \ref{algebra}).
Since $f(x)$ has no multiple roots, it follows from Theorem
\ref{algebra} that
$$W=W(f)=\{0\}$$ and therefore the rank of
 $dM$ at
 $f(x)$ equals
 $$(n-1)-\dim_{\C}(W)=n-1-0=n-1.$$

 \begin{rem}
 \label{general}
A priori, it is clear that the set of all $f(x)$ at which the rank
of $dM$ reaches its maximum value is a non-empty Zariski-open subset
of $P_n$. That is why if we are interested only in the {\sl general
position case} then it suffices to check that the rank is $n-1$ at
least for one $f(x)$: this would imply that the rank is $n-1$ for
{\sl typical} polynomials, i.e., for all polynomials that belong to
a certain non-empty Zariski-open subset of $P_n$.
 \end{rem}

\section{Polynomial Algebra}
{\bf Proof of Theorem \ref{algebra}}. So, let   $f(x)$ be a complex
polynomial of degree $n\ge 2$ and  $p(x)$ a nonzero polynomial of
degree  $\le n-2$ such that
$$R(x):= f^{\prime\prime}(x) p(x) -
f^{\prime}(x)p^{\prime}(x)$$ is divisible by $f(x)$. Without loss of
generality, we may and will assume that both   $f(x)$ and $p(x)$ are
monic, i.e., their leading coefficients  are equal to $1$. Notice
that there is no ``cancelation of degrees" in the expression
$f^{\prime\prime}(x)p(x)-f^{\prime}(x)p^{\prime}(x)$
 (since $p(x)$ and $f^{\prime}(x)$ have distinct degrees).
 Indeed  $\deg(R)\le n+\deg(p)-2$ and the coefficient at
  $x^{n+\deg(p)-2}$ of $R(x)$ equals
 $$n(n-1)-n \deg(p)=n[(n-1)-\deg(p)] \ne 0,$$
 because $\deg(p) \le n-2<n-1$. It follows that
 $R(x)\ne 0$ and
 $$\deg(R)=n-2+\deg(p) \le n-2+n-2=2n-4.$$ (In addition,
 $n[(n-1)-\deg(p)]$ is the leading coefficient of
 $R(x)$.)
 Since $R(x)$ is a nonzero polynomial divisible by  $f(x)$,
 $$n=\deg(f) \le \deg(R)=n-2+\deg(p),$$
 that is why $\deg(p) \ge 2$ and therefore
 $$n-2 \ge \deg(p) \ge 2,$$
 which implies that $n \ge 4$. This proves (i).
 It is also clear that if $n=4$ then $\deg(p)=2$.

 Let us assume now that  $p(x)$ is a monic quadratic polynomial. Then
 $$\deg(R)=n-2+\deg(p)=n-2+2=n=\deg(f)$$and therefore there exists a
 nonzero constant
  $h_0$ such that
$$R(x)=f^{\prime\prime}(x) p(x) -
f^{\prime}(x)p^{\prime}(x)=h_0 f(x).$$ Since $f(x)$ is monic while
the leading coefficient of  $R(x)$ is
$$n[(n-1)-\deg(p)]=n(n-1-2)=n(n-3),$$
we obtain that $h_0=n(n-3)$ and
$$R(x)=f^{\prime\prime}(x) p(x) -
f^{\prime}(x)p^{\prime}(x)=n(n-3)f(x).$$ Differentiating, we obtain
that
$$R^{\prime}(x)=[f^{\prime\prime\prime}(x) p(x)+f^{\prime\prime}(x)p^{\prime}(x)]-
[f^{\prime\prime}(x)p^{\prime}(x)+f^{\prime}(x)p^{\prime\prime}(x)]=n(n-3)f^{\prime}(x).$$
Since $p(x)$  is a monic quadratic polynomial,
$p^{\prime\prime}(x)=2$. Taking this into account,  opening the
parentheses  and grouping together like terms in the formula for
$R^{\prime}(x)$, we obtain that
$$R^{\prime}(x)=f^{\prime\prime\prime}(x)\cdot p(x)-2f^{\prime}(x)=n(n-3)f^{\prime}(x),$$
which gives us
$$f^{\prime\prime\prime}(x)\cdot p(x)=(n^2-3n+2)f^{\prime}(x).$$
We obtain that $p(x)$ divides $f^{\prime}(x)$.  (Recall that $n \ge
4$ and therefore $n^2-3n+2=(n-1)(n-2) \ne 0$.) Since
$$f^{\prime\prime}(x)\cdot p(x) -
f^{\prime}(x)p^{\prime}(x)=n(n-3)f(x),$$ we obtain that $p(x)$
divides $f(x)$. (Recall that $n \ge 4$ and therefore $n(n-3) \ne
0$.) This proves (iii). It is also clear that if (a quadratic
polynomial) $p(x)$ has no multiple roots then $f(x)$ is divisible by
$p(x)^2$. On the other hand, if  $p(x)$ has a double root $\beta$,
i.e., $p(x)=(x-\beta)^2$ then $f(x)$ is divisible by $(x-\beta)^3$.

Let us prove  assertions (ii) and (iv) of Theorem \ref{algebra}. If
all the roots of $f(x)$ are multiple then  (ii) holds true. Let us
assume that $f(x)$ has a simple root and denote it by $\alpha$. Then
$f^{\prime}(\alpha) \ne 0$  and there exists a complex polynomial
$f_1(x)$ of degree $n-1$ such that
$$f(x)=(x-\alpha) f_1(x).$$ The simplicity of $\alpha$ means that
$$f_1(\alpha) \ne 0, \  f^{\prime}(\alpha) \ne 0.$$
Let  $V$ be the two-dimensional vector (sub)space of polynomials
generated by  $f^{\prime}(x)$ and $p(x)$. (It is two-dimensional,
because nonzero $f^{\prime}(x)$ and $p(x)$ have different degrees.)
Clearly, the degree of any polynomial from  $V$ does not exceed
$n-1$. It is also clear that for each $q(x)\in V$ the polynomial
$f^{\prime\prime}(x) q(x) - f^{\prime}(x)q^{\prime}(x) $ is
divisible by $f(x)$, since this is true for both $q(x)=p(x)$ and
$q(x)=f^{\prime}(x)$. Choose in the two-dimensional $V$ {\sl a
nonzero} polynomial $q(x)$ such that
$$q(\alpha)=0.$$ Clearly, $$q(x) \not\in \C\cdot f^{\prime}(x), \ \deg(q) \ \le
n-1.$$ We have
$$f^{\prime\prime}(x) q(x) - f^{\prime}(x)q^{\prime}(x)=h(x)f(x) \eqno{(*)}$$
for some complex polynomial $h(x)$. Since
$$f(\alpha)=0, \ f^{\prime}(\alpha)\ne 0, \ q(\alpha)=0,$$
we conclude that $q^{\prime}(\alpha)=0$, i.e., $\alpha$ is a
multiple root of nonzero  $q(x)$, hence, there exists a nonzero
complex polynomial $q_1(x)$ such that
$$q(x)=(x-\alpha)^2 q_1(x), \ \deg(q_1)=\deg(q)-2\le (n-1)-2=\deg(f_1)-2.$$
I am going to prove that ${f_1}^{\prime\prime}(x) q_1(x) -
{f_1}^{\prime}(x){q_1}^{\prime}(x)$ is divisible by $f_1(x)$ (and
apply induction by $n$).

 We have
$$f(x)=(x-\alpha)f_1(x), \
f^{\prime}(x)=f_1(x)+(x-\alpha){f_1}^{\prime}(x), \
f^{\prime\prime}(x) = 2 {f_1}^{\prime}(x)+
(x-\alpha){f_1}^{\prime\prime}(x),$$
$$q(x)=(x-\alpha)^2 q_1(x), \ q^{\prime}(x)=(x-\alpha)^2
{q_1}^{\prime}(x)+2(x-\alpha)  q_1(x).$$ Plugging in these
expressions in  (*), we obtain
$$[2 {f_1}^{\prime}(x)+
(x-\alpha){f_1}^{\prime\prime}(x)](x-\alpha)^2
q_1(x)-[f_1(x)+(x-\alpha){f_1}^{\prime}(x)][(x-\alpha)^2
{q_1}^{\prime}(x)+2(x-\alpha)  q_1(x)]=$$ $$=h(x)(x-\alpha)f_1(x).$$
Dividing both sides by $(x-\alpha)$, we obtain
$$[2 {f_1}^{\prime}(x)+
(x-\alpha){f_1}^{\prime\prime}(x)]
q_1(x)(x-\alpha)-[f_1(x)+(x-\alpha){f_1}^{\prime}(x)][(x-\alpha)
{q_1}^{\prime}(x)+2 q_1(x)]=h(x)f_1(x).$$ Moving to the right hand
side all the terms containing $f_1(x)$, we obtain that
$$[2 {f_1}^{\prime}(x)+
(x-\alpha){f_1}^{\prime\prime}(x)] q_1(x)(x-\alpha)-
(x-\alpha){f_1}^{\prime}(x)[(x-\alpha) {q_1}^{\prime}(x)+2
q_1(x)]=$$ $$=\{h(x)+[(x-\alpha) {q_1}^{\prime}(x)+2
q_1(x)]\}f_1(x).$$ If we put $h_1(x):=h(x)+[(x-\alpha)
{q_1}^{\prime}(x)+2 q_1(x)]$ then we get
$$[2 {f_1}^{\prime}(x)+
(x-\alpha){f_1}^{\prime\prime}(x)] q_1(x)(x-\alpha)-
(x-\alpha){f_1}^{\prime}(x)[(x-\alpha) {q_1}^{\prime}(x)+2
q_1(x)]=h_1(x)f_1(x).$$

The left hand side is divisible by $(x-\alpha)$,  while  $f_1(x)$
(in the right hand side) is not divisible by $(x-\alpha)$. This
means that there exists a complex polynomial $h_2(x)$ such that
$h_1(x)=(x-\alpha)h_2(x)$ and
$$[2 {f_1}^{\prime}(x)+
(x-\alpha){f_1}^{\prime\prime}(x)]
q_1(x)-{f_1}^{\prime}(x)[(x-\alpha) {q_1}^{\prime}(x)+2
q_1(x)]=h_2(x)f_1(x).$$ Opening the parentheses  in the left hand
side and grouping together like terms, we obtain
$$(x-\alpha){f_1}^{\prime\prime}(x)
q_1(x)-{f_1}^{\prime}(x)(x-\alpha) {q_1}^{\prime}(x)=h_2(x)f_1(x).$$
Again  the left hand side is divisible by $(x-\alpha)$, while
$f_1(x)$ (in the right hand side) is not divisible by $(x-\alpha)$.
This means that there exists a complex polynomial $h_3(x)$ such that
 $h_2(x)=(x-\alpha)h_3(x)$ and
$${f_1}^{\prime\prime}(x) q_1(x)-{f_1}^{\prime}(x)
{q_1}^{\prime}(x)=h_3(x)f_1(x).$$ Let us treat separately the cases
 $n=4$ and $n>4$.

Suppose that  $n=4$. Then $\deg(f_1)=4-1=3$ and we get a
contradiction to the already proven  (i). Therefore all the roots of
 $f(x)$ are multiple, which implies that
 $f(x)$ has  either two double roots or one root of multiplicity $4$.  (In particular $f(x)$
 has no roots of multiplicity $3$.)
We also know that $\deg(p)=2$ while $p(x)$ divides both $f(x)$ and
$f^{\prime}(x)$. We have seen that if quadratic $p(x)$ has no
multiple   roots then $f(x)$ is divisible by $p(x)^2$ and the
comparison of degrees and leading coefficients implies that $f(x)=
p(x)^2$. On the other hand, if $p(x)$ has a double root $\beta$ then
we have seen that $p(x)=(x-\beta)^2$ and $f(x)$ is divisible by
$(x-\beta)^3$. Since $f(x)$ has no roots of multiplicity $3$, we
conclude that monic $f(x)=(x-\beta)^4$. Since  $p(x)=(x-\beta)^2$ we
conclude that $f(x)= p(x)^2$. This proves (iv). In addition, it also
proves (ii) for $n=4$.


Suppose that $n>4$. Then $\deg(f_1)=n-1$. Using induction by $n$, we
may apply the assertion (ii) of Theorem \ref{algebra} to $n-1$
(instead of $n$) and $f_1$ and $q_1$ (instead of $f$ and $p$
respectively) and obtain that there exists a quadratic polynomial
$\tilde{p}(x)$ such that $f_1(x)$ is divisible by $\tilde{p}(x)^2$.
It follows that $f(x)=(x-\alpha)f_1(x)$ is also divisible by
$\tilde{p}(x)^2$. This ends the proof.

\begin{rem} Inspecting the proof of Theorem
 \ref{algebra}, we observe that it works (not only over arbitrary fields of characteristic zero but also) over fields of characteristic $>n$.
In particular, the assertion of Theorem
   \ref{algebra} remains true over fields of positive characteristic $>n$.
\end{rem}

\begin{ex}
Let $p(x)$ be a monic quadratic polynomial. Let us put $n=4$ and
$f(x)=p(x)^2$. Then
$$p^{\prime\prime}(x)=2, \ f^{\prime}(x)=2 p(x) p^{\prime}(x),$$
$$f^{\prime\prime}(x)=2 p^{\prime}(x)^2+2 p(x) p^{\prime\prime}(x)=2
p^{\prime}(x)^2+4 p(x).$$ Then
$$f^{\prime\prime}(x) p(x) -
f^{\prime}(x)p^{\prime}(x)=[2 p^{\prime}(x)^2+4 p(x)]p(x)-2 p(x)
p^{\prime}(x)p^{\prime}(x)=4 p(x)^2 = 4 f(x).$$
\end{ex}

\begin{ex}
Let $d$ be an arbitrary complex number. Let us put $n=5$,
$$f(x):=(x+d)\left(x^2 -\frac{d}{2}x+\frac{9}{4} d^2\right)^2=
x^5+\frac{15}{4}d^2 x^3+\frac{5}{2}d^3 x^2+\frac{45}{16}d^4
x+\frac{81}{16}d^5,$$
$$p(x):= x^3+2d^2 x+\frac{9}{8}d^3.$$ Then
$$f^{\prime\prime}(x) p(x) - f^{\prime}(x)p^{\prime}(x)=5x\cdot f(x).$$
\end{ex}

\begin{rem}
\label{genericcase} If   $f(x)$ is the polynomial $x^{n}-1$, which
has no multiple roots, then one may directly check that there does
{\sl not} exist a nonzero polynomial  $p(x)$ of degree $\le n-2$
such that
$$R(x)= f^{\prime\prime}(x) p(x) - f^{\prime}(x)p^{\prime}(x)$$ is
divisible by  $f(x)$, i.e., $R(x)=f(x)h(x)$ for some polynomial
 $h(x)$. Indeed, let us assume that such a  $p(x)$ does exist; we
 may assume that  $p(x)$ is monic. Obviously
 (see the very beginning of the proof of theorem
 \ref{algebra})
$$R(x)\ne 0, \ \deg(R) \le 2n-4.$$ This implies that
 $h(x)\ne 0$ and
$$\deg(h)=\deg(R)-n \le n-4.$$
Plugging in into the formula for
  $R(x)$ explicit formulas for $f(x)$ and its first and second derivatives, we obtain that
 $$ n(n-1) x^{n-2} p(x)-n x^{n-1} p^{\prime}(x)=h(x) (x^n-1).$$
 This implies that nonzero
$h(x)$ is divisible by $x^{n-2}$ and therefore $\deg(h)\ge n-2$,
which is not true. The obtained contradiction proves that such a
$p(x)$ does not exist and therefore
 $W(f)=\{0\}$, i.e., the rank of
 $dM$ at $f(x)=x^n-1$ has rank $n-1$. According to Remark
 \ref{general}, it follows immediately that the rank of
 $dM$ equals $n-1$ for
{\sl typical} polynomials.
\end{rem}

\begin{rem}

After having read previous versions of this paper, Elmer Rees
\cite{Rees} and (independently and simultaneously) Victor S. Kulikov
outlined a more direct way of approaching the proof of Theorem
\ref{main1}. Namely, they consider the Jacobi matrix of the map
$$\widetilde{M}: (\alpha_1, \dots , \alpha_n) \mapsto (f^{\prime}(\alpha_1), \dots ,
f^{\prime}(\alpha_n))$$ and prove that its first principal
 $(n-1)\times (n-1)$ minor equals
$$c \prod_{1 \le i<j<n}(\alpha_i-\alpha_j)^2,$$ where $c$ is a
constant that does not depend on  $f(x)$. (In order to prove Theorem
\ref{main1} it suffices to check that $c \ne 0$.) Rees \cite{Rees}
proved
that  $$c =(-1)^{(n-1)(n-2)/2} (n-1)!.$$ Kulikov's approach is
presented in the Appendix to this paper that he has kindly agreed to
contribute.

\end{rem}

\vskip 1cm

 {\bf Acknowledgements.} This note has arisen from an attempt to answer a
 question posed by Yu.S. Ilyashenko in connection with  \cite{Ilyash,Igor}. (In fact, he was interested in
 the rank of $dM$ for {\sl typical} polynomials $f(x)$.) I thank him for his questions,
stimulating discussions and interest in this paper. I also thank
E.E. Shnol, whose papers, lectures and seminars developed my
interest in algebraic aspects of ODE. I am grateful to Vladimir L.
Popov and Victor S. Kulikov, who read a preliminary version of this
note and made several useful remarks.

\section[Appendix by Victor S. Kulikov]{Appendix by Victor S. Kulikov \protect\footnote{Steklov Mathematical Institute of the Russian Academy of Sciences,
 email: kulikov\char`\@mi.ras.ru} }
Recall (see Remarks \ref{general} and \ref{genericcase}) that it is
much more easier to prove  that the differential $dM$ has rank $n-1$
at a generic point rather than  Theorem \ref{main1} in full. This
observation allows us to deduce Theorem \ref{main1} from the
following statement that is of independent interest and whose proof
requires minimal computations.
 Let us consider a polynomial
$$f(x,\alpha)=(x-\alpha_1)\dots (x-\alpha_n)=x^n+\sum_{i=0}^{n-1}a_ix_i,$$ where  $\alpha
=(\alpha_1\dots,\alpha_n)$. Let us define the map $\widetilde
M:\C^n\to \C^n$ by
 $$\widetilde
M(\alpha)=(f_x'(\alpha_1,\alpha),\dots, f_x'(\alpha_n,\alpha)).$$
Let $D$ be the Jacobi matrix of the map
 $\widetilde M$.
\begin{thm}
\label{minor} The rank $\rg\, D$ of the matrix $D$ is $\leq n-1$.
The vector $v=(1,\dots,1)$ is an eigenvector of  $D$ with eigenvalue
$0$. If
 $n>1$ then for each $i$, $1\leq i\leq n$ the minor $\M_{i,i}$
of $D$ equals $c \prod_{j<k,j\neq i,k\neq i}(\alpha_k-\alpha_j)^2$,
where $c$ is a certain nonzero constant
\end{thm}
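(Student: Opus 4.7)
The plan is to work directly from the explicit formulas for the entries of $D$. Writing $\widetilde M_i(\alpha)=f'_x(\alpha_i,\alpha)=\prod_{m\neq i}(\alpha_i-\alpha_m)$ and differentiating gives, for $j\neq i$, $D_{ij}=-\prod_{m\neq i,j}(\alpha_i-\alpha_m)$, while $D_{ii}=\sum_{\ell\neq i}\prod_{m\neq i,\ell}(\alpha_i-\alpha_m)=-\sum_{\ell\neq i}D_{i\ell}$. The identity $\sum_j D_{ij}=0$ then yields $D\cdot(1,\dots,1)^T=0$, proving simultaneously the eigenvector statement and $\rg D\leq n-1$. Conceptually, this is the infinitesimal avatar of the translation invariance $\widetilde M(\alpha+c(1,\dots,1))=\widetilde M(\alpha)$, which is immediate from the product formula.

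For the minor I would first exploit the permutation equivariance $\widetilde M(\sigma\alpha)=\sigma\widetilde M(\alpha)$ (easily checked from the product), which on the Jacobi level becomes $D(\sigma\alpha)=P_\sigma D(\alpha)P_\sigma^{-1}$. This reduces everything to the single case $i=n$, and at the same time shows that $\M_{n,n}$ is invariant under every transposition $(j\,k)$ with $j,k\leq n-1$, hence symmetric in $\alpha_1,\dots,\alpha_{n-1}$. Next, I would substitute $\alpha_j=\alpha_k$ with $j<k\leq n-1$ into the deleted $(n-1)\times(n-1)$ matrix: every entry of row $j$ that carries the factor $(\alpha_j-\alpha_k)$ drops out, leaving only $D_{jj}=\tilde g$ and $D_{jk}=-\tilde g$ with $\tilde g=\prod_{m\neq j,k}(\alpha_j-\alpha_m)$, and row $k$ becomes the exact negative of row $j$. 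Thus $\M_{n,n}$ vanishes on $\alpha_j=\alpha_k$; the symmetry under the swap $\alpha_j\leftrightarrow\alpha_k$ forces the quotient by $(\alpha_j-\alpha_k)$ to be antisymmetric and hence itself divisible by $(\alpha_j-\alpha_k)$, so $(\alpha_j-\alpha_k)^2$ divides $\M_{n,n}$. Since the quadratic factors for distinct pairs are pairwise coprime in $\C[\alpha_1,\dots,\alpha_n]$, their product $\prod_{j<k\leq n-1}(\alpha_k-\alpha_j)^2$ divides $\M_{n,n}$.

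A degree count closes the argument. Each entry $D_{ij}$ has total degree $n-2$ in $(\alpha_1,\dots,\alpha_n)$, so $\M_{n,n}$ has total degree at most $(n-1)(n-2)$, exactly matching the degree of the divisor. The quotient is therefore a scalar $c\in\C$, which by the permutation equivariance is the same $c$ for every $\M_{i,i}$. For the nonvanishing I would invoke Remark~\ref{genericcase}: at $f(x)=x^n-1$ the differential $dM$ is already shown to have rank $n-1$, so $D=dM\cdot d\Phi$ (with $\Phi$ sending roots to coefficients and $d\Phi$ the Vandermonde, invertible at distinct roots) also has rank $n-1$ there, forcing some principal $(n-1)\times(n-1)$ minor to be nonzero and hence $c\neq 0$. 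The main obstacle I anticipate is precisely this upgrade from a simple to a double zero on $\alpha_j=\alpha_k$: the row-dependence alone supplies only one factor of $(\alpha_j-\alpha_k)$, and the squaring is recovered only from the $S_{n-1}$-symmetry afforded by the fact that the deleted index $n$ is fixed by every transposition among $\{1,\dots,n-1\}$; without it the degrees would be off by a factor of two and the argument would collapse.
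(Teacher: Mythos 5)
Your proof is correct, and its skeleton coincides with Kulikov's in the Appendix: explicit entries of $D$, zero row sums giving $Dv=0$ and $\rg D\le n-1$, symmetry of $\M_{i,i}$ in the $\alpha_j$ with $j\ne i$ via simultaneous row/column/variable permutations, divisibility by each $(\alpha_j-\alpha_k)$ upgraded to $(\alpha_j-\alpha_k)^2$ by the symmetric-implies-antisymmetric-quotient trick, a degree count in $(n-1)(n-2)$, and nonvanishing of $c$ from the generic-rank statement of Remarks \ref{general} and \ref{genericcase}. The one step where you take a genuinely different route is the proof that $\M_{i,i}$ vanishes on $\alpha_j=\alpha_k$: the paper argues geometrically that the image of this hyperplane under $\widetilde M_i$ lands in $\{y_j=y_k=0\}$, hence has codimension $\ge 2$, so the Jacobian of the restricted map must vanish there; you instead substitute $\alpha_j=\alpha_k$ into the matrix and observe that rows $j$ and $k$ of the deleted matrix become exact negatives of each other. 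Your version is more elementary (purely linear-algebraic, no appeal to the relation between Jacobians and dimensions of images) and makes the vanishing a visible polynomial identity rather than a consequence of a dimension count; the paper's version is shorter to state and explains \emph{why} the vanishing happens (collision of roots kills two multipliers at once). Your closing argument for $c\ne 0$ at the specific point $x^n-1$ is also sound, though note that a rank-$(n-1)$ matrix need not in general have a nonzero principal minor; here it does because $\ker D$ is spanned by $(1,\dots,1)$, whose coordinates are all nonzero, so the rank-one adjugate $v\,u^{T}$ has a nonzero diagonal entry --- it would be worth making that half-line explicit.
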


Clearly, Theorem \ref{main1} is a corollary of Theorem \ref{minor}.
Indeed, first  $\M_{i,i}\neq 0$ at every point with pairwise
distinct coordinates. Second, for each $\alpha$ the line
$l_{\alpha}(t)=\alpha-t(1,\dots,1)$ lies in a fiber of the map
 $\widetilde M$. Therefore the curve
$f(x+t)$ lies in a fiber of the map $M$ and the tangent vector to
this curve at the point $f(x)$ is $f'(x)$.

\begin{proof}[Proof of Theorem \ref{minor}] The map $\widetilde M$ with respect to the coordinates $(\alpha_1,\dots,
\alpha_n)$ is defined by polynomials
 $y_i=f_x'(\alpha_i,\alpha)=(\alpha_i-\alpha_1)\dots
(\alpha_i-\alpha_{i-1})(\alpha_i-\alpha_{i+1})\dots
(\alpha_i-\alpha_n)$, $1\leq i\leq n$. Computing the partial
derivatives, one may easily observe that the entry
 $d_{i,j}$ of $D$ equals
$d_{i,j}=-\frac{f_x'(\alpha_i,\alpha)}{(\alpha_i-\alpha_j)}$ if
$j\neq i$ and $d_{i,i}=\sum_{j\neq
i}\frac{f_x'(\alpha_i,\alpha)}{(\alpha_i-\alpha_j)}$. Therefore the
sum of all columns of $D$ equals zero. That is why  $\det D=0$ and
the vector вектор $v=(1,\dots,1)$ is an eigenvector of $D$ with
eigenvalue $0$.

Let us prove that the minor  $\M_{i,i}$ of $D$ equals $c
\prod_{j<k,j\neq i,k\neq i}(\alpha_k-\alpha_j)^2$. When $n=2$ this
is obvious.

Suppose that  $n>2$. First notice that $\M_{i,i}$ is a polynomial in
variables  $\alpha_j$ of degree $(n-1)(n-2)$. Besides, one may
easily observe that the minors  $\M_{i,i}$ are invariant with
respect to  transformations
 $\sigma_{j,k}$ defined as follows:
 an exchange of $j$th and $k$th rows followed by the
 exchange of columns with the same numbers and a change of
 variables  $\alpha_j\leftrightarrow \alpha_k$. Since the operation
 of exchanging
  two columns and two rows of a matrix does not change
 its determinant,  $\M_{i,i}$ is a symmetric function in variables
 $\alpha_j$, $j\neq i$. Further, let us restrict
 $\widetilde M$ to the hyperplane
$\alpha_i=\const$ and consider the composition of this restriction
with  the projection map to the hyperplane $y_i=0$: $(y_1,\dots
,y_n)\mapsto (y_1,\dots ,y_{i-1},y_{i+1},\dots, y_n)$. Denote the
obtained map by $\widetilde M_i:\C^{n-1}\to\C^{n-1}$. Clearly, the
Jacobi matrix of $\widetilde M_i$ coincides with the matrix $D_i$
obtained from $D$ by deleting its $i$th row and $i$th column. It is
easy to observe that for each triple  $i,j,k$ of pairwise distinct
indices the image of the hyperplane $\alpha_j=\alpha_k$ under
$\widetilde M_i$ has codimension $\geq 2$, because it lies in the
intersection of the hyperplanes  $y_j=0$ and $y_k=0$.  Therefore the
jacobian of $\widetilde M_{i}$ vanishes at
 $\alpha_j=\alpha_k$ and any value of $\alpha_i$. On the other hand,
 the jacobian of
 $\widetilde M_{i}$ coincides with
$\M_{i,i}$ and therefore $\M_{i,i}$ is divisible by
$\alpha_j-\alpha_k$. Since $\M_{i,i}$ is a symmetric polynomial in
variables  $\alpha_j$, $j\neq i$, we obtain that $\M_{i,i}$ is
divisible by  $(\alpha_j-\alpha_k)^2$ for each triple $i,j,k$ of
pairwise distinct indices.  Taking into account that
$\deg(\M_{i,i})=(n-1)(n-2)$, we obtain that $\M_{i,i}=c
\prod_{j<k,j\neq i,k\neq i}(\alpha_k-\alpha_j)^2$ for a certain
constant константы $c$.  Since the vector $(1,\dots,1)$ is not
tangent to the hyperplane $\alpha_i=\const_i$, one may easily
observe that  $c\neq 0$ because $dM$ (and therefore $d\widetilde M$)
has rank  $n-1$ at a generic point.
\end{proof}

\begin{thm} The image of the map $\widetilde M:\C^n\to \C^n$
lies in the irreducible hypersurface
  $S_{n-1}$ that is defined in $\C^n$ by the equation
\begin{equation} \label{eq1} \sum_{i=1}^n y_1\dots
y_{i-1}y_{i+1}\dots y_n=0.\end{equation} The image $\widetilde
M(\mathbb C^n)$ is everywhere dense in  $S_{n-1}$ with respect to
the complex topology.

\end{thm}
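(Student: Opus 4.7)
The plan is to establish the three separate claims: (a) that $\widetilde M(\C^n)\subset S_{n-1}$, (b) that the defining polynomial $E(y)=\sum_{i=1}^n\prod_{j\neq i}y_j$ is irreducible, so that $S_{n-1}$ is an irreducible hypersurface of dimension $n-1$, and (c) that $\widetilde M(\C^n)$ is dense in $S_{n-1}$. The containment will be a direct computation via Lagrange interpolation; irreducibility will come from the fact that $E$ is linear in the single variable $y_n$; density will be deduced from Theorem~\ref{minor} together with a dimension count.

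For (a), on the Zariski-open locus where the $\alpha_i$ are pairwise distinct, every $y_i=f'_x(\alpha_i,\alpha)=\prod_{j\neq i}(\alpha_i-\alpha_j)$ is nonzero, and Lagrange's interpolation formula applied to the constant function $1$ with nodes $\alpha_1,\dots,\alpha_n$ reads
$$1=\sum_{i=1}^n\prod_{j\neq i}\frac{x-\alpha_j}{\alpha_i-\alpha_j}.$$
Comparing the coefficient of $x^{n-1}$ on both sides gives $\sum_i y_i^{-1}=0$; clearing denominators yields precisely the defining equation of $S_{n-1}$. Both sides of that equation are polynomials in $\alpha$, so the identity extends from the dense open locus to all of $\C^n$, establishing (a).

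For (b), I would use the expansion
$$E(y_1,\dots,y_n)=y_n\,E_{n-2}(y_1,\dots,y_{n-1})+y_1y_2\cdots y_{n-1},$$
where $E_{n-2}$ denotes the $(n-2)$-nd elementary symmetric polynomial in $n-1$ variables. Since $E$ has degree $1$ in $y_n$, in any factorization $E=PQ$ the variable $y_n$ must appear in only one factor, say $Q$; then $P\in\C[y_1,\dots,y_{n-1}]$ must divide both coefficients $E_{n-2}(y_1,\dots,y_{n-1})$ and $y_1y_2\cdots y_{n-1}$. But no single variable $y_k$ divides $E_{n-2}$, because $E_{n-2}$ contains as one of its summands the monomial that omits $y_k$. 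Hence the gcd of the two coefficients in $\C[y_1,\dots,y_{n-1}]$ is a constant, $P$ is a unit, and $E$ is irreducible.

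For (c), Theorem~\ref{minor} shows that at any $\alpha\in\C^n$ with pairwise distinct coordinates the rank of $d\widetilde M$ equals $n-1$. Since this is the maximum possible rank, it stays constant in a neighborhood of such an $\alpha$, so by the constant rank theorem $\widetilde M$ locally maps onto a complex submanifold of dimension $n-1$ contained in $S_{n-1}$. Because $S_{n-1}$ is itself irreducible of dimension $n-1$ by (b), this submanifold is Zariski-dense in $S_{n-1}$; hence $\widetilde M(\C^n)$ contains a nonempty Zariski-open subset of $S_{n-1}$, and therefore is dense in $S_{n-1}$ in the complex topology as well. I expect the main obstacle to be the irreducibility argument in (b), where the trick is to use linearity in $y_n$ to reduce factorization of $E$ to a divisibility relation in a smaller polynomial ring; the containment and density steps are routine computations and dimension counts.
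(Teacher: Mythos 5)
Your proposal is correct and reaches the same conclusion, but it diverges from the paper in two of its three parts. For the containment $\widetilde M(\C^n)\subset S_{n-1}$ the paper does not use Lagrange interpolation at all: it substitutes $y_i=f_x'(\alpha_i,\alpha)$ into $s_{n-1}$, notes the result is a symmetric polynomial in $\alpha$ of degree $(n-1)^2$ that vanishes whenever $\alpha_i=\alpha_j$, hence is divisible by $\prod_{i\ne j}(\alpha_i-\alpha_j)$ of degree $n(n-1)>(n-1)^2$, and concludes it is identically zero. Your interpolation identity $\sum_i 1/f'(\alpha_i)=0$ (the coefficient of $x^{n-1}$ in $\sum_i\prod_{j\ne i}\frac{x-\alpha_j}{\alpha_i-\alpha_j}=1$) is a cleaner and more conceptual route to the same polynomial identity, and it explains \emph{why} the hypersurface is the one it is; the paper's degree count is shorter but less illuminating. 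Your irreducibility argument via linearity in $y_n$ is a genuine addition: the paper only asserts that irreducibility of $s_{n-1}$ ``can be easily checked,'' and your gcd argument on the two coefficients $E_{n-2}(y_1,\dots,y_{n-1})$ and $y_1\cdots y_{n-1}$ is a correct way to check it. For density your skeleton matches the paper's (rank $n-1$ at points with distinct coordinates, hence the Zariski closure of the image has dimension $n-1$, hence equals $S_{n-1}$ by irreducibility), but one inference as you wrote it is not valid for arbitrary sets: ``the image is Zariski-dense, hence it contains a nonempty Zariski-open subset of $S_{n-1}$.'' A Zariski-dense subset of an irreducible variety need not contain any nonempty Zariski-open subset (and Zariski-density alone would not give density in the complex topology), so you must invoke constructibility of the image of a morphism (Chevalley's theorem, which the paper cites from Shafarevich): the image of $\widetilde M$ contains a nonempty Zariski-open subset of its Zariski closure, and once that closure is identified with $S_{n-1}$, the complement of that open set is a proper subvariety, which is nowhere dense in the complex topology. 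With that one citation added, your proof is complete.
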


\begin{proof}
The irreducibility of $S_{n-1}$  follows from the irreducibility of
the polynomial
$$s_{n-1}=\sum_{i=1}^n  y_1\dots y_{i-1}y_{i+1}\dots y_n,$$
which can be easily checked.
The map $\widetilde M$ is defined by polynomials
$$y_1=f_x^{\prime}(\alpha_1,\alpha),\dots,
y_n=f_x^{\prime}(\alpha_n,\alpha).$$ Let us view $s_{n-1}$ as a
polynomial in   $\alpha_1,\dots,\alpha_n$, plugging in
$y_i=f_x^{\prime}(\alpha_i,\alpha)$.
We observe that $s_{n-1}$ is a homogeneous symmetric polynomial in
$\alpha_1,\dots,\alpha_n$ of degree $(n-1)^2$.

Clearly, if $\alpha_i=\alpha_j$ then $y_i=y_j=0$. Therefore the
polynomial $s_{n-1}$ vanishes if $\alpha_i=\alpha_j$  and therefore
it is divisible by  $\alpha_i-\alpha_j$ for each $i$ and $j$, $i\neq
j$. Since $s_{n-1}$ is symmetric, it is divisible by
$(\alpha_i-\alpha_j)^2$. Therefore $s_{n-1}$ is divisible by the
polynomial
 $\prod_{i\neq
j}(\alpha_i-\alpha_j)$, whose degree is $n(n-1)$. Since
$n(n-1)>(n-1)^2$, we have $s_{n-1}(\alpha)\equiv 0$ and therefore
$\widetilde M(\mathbb C^n)$ lies in $S_{n-1}$.
Let $X$ be the open algebraic subvariety of all points in   $\C^n$
with pairwise distinct coordinates and let $Y \subset S_{n-1}$ be
the Zariski closure of the (sub)set $\widetilde M(X)$. Clearly, $Y$
is an irreducible closed algebraic subvariety in  $S_{n-1}$; in
particular,  $\dim(Y)\le \dim(S_{n-1})=n-1$. It follows from
\cite[Ch. 1, Sect. 5, Th. 6]{Sh}  that
 $\widetilde M(X)$ contains a Zariski-open non-empty subset of
 $Y$; in particular, it contains a nonsingular point of $Y$.
Theorem \ref{minor} combined with irreducibility of  $S_{n-1}$
implies that $\dim(Y)\ge n-1$. It follows that $\dim(Y)=
n-1=\dim(S_{n-1})$ and therefore $Y=S_{n-1}$. Since the image
  $\widetilde M(\C^n)$ contains $\widetilde M(X)$, we conclude that
 $\widetilde M(\C^n)$ contains a Zariski-open non-empty subset of
 $S_{n-1}$; in particular,  $\widetilde M(\mathbb C^n)$ is
 everywhere dense in
$S_{n-1}$ with respect to the complex topology.

\end{proof}

 \end{document}